\documentclass[11pt]{amsart}
\usepackage{amssymb}
\DeclareMathAlphabet{\mathantt}{OT1}{antt}{li}{it}
\DeclareMathAlphabet{\mathpzc}{OT1}{pzc}{m}{it}

\usepackage{tikz}
\usepackage{tikz-cd}
\usepackage[colorlinks=true,linkcolor=blue,urlcolor=blue,citecolor=blue, pagebackref]{hyperref}
\usepackage[alphabetic]{amsrefs}
\usepackage{soul}
\usepackage[margin=1.0in]{geometry}
\usepackage{xypic, verbatim,amscd,color}

\pagecolor{white}

\newcommand{\C}{\Bbb{C}}

\numberwithin{equation}{section}

\newcommand\op{\operatorname}

\newcommand{\stab}{\op{Stab}}

\newtheorem{theorem}{Theorem}[section]

\newtheorem{corollary}[theorem]{Corollary}
\newtheorem{question}[theorem]{Question}

\newtheorem{definition/lemma}[theorem]{Definition/Lemma}

\newcommand{\ag}{G(\mathcal{K})/G(\mathcal{O})}

\begin{document}
\title[PRV conjecture via cyclic convolution]{A proof of the refined PRV conjecture via the cyclic convolution variety}
\author{Joshua Kiers}
\begin{abstract}
In this brief note we illustrate the utility of the geometric Satake correspondence by employing the cyclic convolution variety to give a simple proof of the Parthasarathy-Ranga Rao-Varadarajan conjecture, along with Kumar's refinement. The proof involves recognizing certain MV-cycles as orbit closures of a group action, which we make explicit by unique characterization. In an appendix, joint with P. Belkale, we discuss how this work fits in a more general framework.
\end{abstract}
\maketitle
\section{Introduction}

We give a short proof of the Parthasarathy-Ranga Rao-Varadarajan conjecture first proven independently by Kumar in \cite{K} and Mathieu in \cite{M}. Our method extends to give a proof of Verma's refined conjecture which was first proven by Kumar in \cite{Kr}.

Let $\check G$ be a complex reductive group, whose representation theory we are interested in. (We reserve the symbol $G$ for the complex reductive Langlands dual group of $\check G$ because $G$ will be used more prominently in the proof, which goes through the geometric Satake correspondence.)
Fix a maximal torus $\check T$ and Borel subgroup $\check B$ of $\check G$.
Let $W$ be the Weyl group of $\check G$ (equivalently, of $G$). The statement of the original theorem is
\begin{theorem}[PRV conjecture]\label{playahata}
Let $\lambda, \mu$ be dominant weights for $\check G$ with respect to $\check B$, and let $w\in W$ be any Weyl group element. Find $v\in W$ so that $\nu:=v(-\lambda-w\mu)$ is dominant. Then
$$
\left(V(\lambda)\otimes V(\mu)\otimes V(\nu)\right)^{\check G}\ne (0).
$$
\end{theorem}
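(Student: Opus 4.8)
The plan is to deduce Theorem~\ref{playahata} from the geometric Satake correspondence by exhibiting a specific nonzero vector in the tensor-product invariants via intersection theory on the affine Grassmannian. Recall that under geometric Satake, for a simple complex group $G$ with affine Grassmannian $\agr = \ag$, the irreducible representation $V(\lambda)$ of $\check G$ is realized as the intersection cohomology of the orbit closure $\overline{\agr^\lambda}$, where $\agr^\lambda = G(\mathcal{O})\cdot t^\lambda$, and the multiplicity space $\left(V(\lambda)\otimes V(\mu)\otimes V(\nu)\right)^{\check G}$ is computed by the top cohomology of the cyclic convolution variety --- the fiber over the basepoint of the convolution morphism $\overline{\agr^\lambda}\,\widetilde{\times}\,\overline{\agr^\mu}\,\widetilde{\times}\,\overline{\agr^\nu}\to\agr$. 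Concretely, this invariant space has a basis indexed by the irreducible components of top dimension of this fiber (the relevant MV-cycles), so it suffices to produce \emph{one} such component, i.e. to show the cyclic convolution variety for $(\lambda,\mu,\nu)$ is nonempty of the expected dimension.

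First I would translate the problem into a statement about points of the affine Grassmannian: a point of the cyclic convolution fiber is a triple of lattices (or $G(\mathcal{O})$-torsors with trivializations) whose successive relative positions are bounded by $\lambda$, $\mu$, $\nu$ and which ``close up'' cyclically. Second, I would use the Weyl group element $w$ and the auxiliary element $v$ to write down an \emph{explicit} such triple: the cocharacters $t^\lambda$, $t^{-w\mu}$ (placed appropriately), and the closing-up point, chosen so that the three relative positions are exactly the extreme weights $\lambda$, $w\mu$, and $v^{-1}\nu = -\lambda-w\mu$ of the respective representations. The key algebraic input is that in $\check G$ (equivalently via the root datum, in $G$) the weight $w\mu$ is a weight of $V(\mu)$ and $-\lambda - w\mu$ lies in the Weyl orbit of the dominant $\nu$, so all three cocharacters are bona fide points in the respective orbit closures; the cyclic sum $\lambda + w\mu + (-\lambda-w\mu) = 0$ is exactly what makes the triple close up at the basepoint. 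Third --- and this is the heart of the argument --- I would identify the closure of the $G(\mathcal{O})$-orbit (or a suitable stabilizer-group orbit) through this explicit triple as an honest irreducible component of the convolution fiber of the correct (top) dimension, by giving a \emph{unique characterization} of that MV-cycle: it is the unique component whose defining sequence of relative positions is the given tuple of extreme weights. This pins down the component and shows it is not swallowed by a larger-dimensional stratum, hence contributes a nonzero basis vector to the invariants.

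The main obstacle I expect is precisely the dimension/irreducibility bookkeeping in that third step: one must verify that the chosen component of the convolution variety actually has the \emph{expected} dimension $\tfrac12(\langle\lambda+\mu+\nu,2\check\rho\rangle)$ (so that its fundamental class survives in top cohomology and is not killed), rather than being a lower-dimensional piece that contributes nothing to the multiplicity space. Equivalently, one must rule out that the explicit point lies only in the boundary $\overline{\agr^\lambda}\setminus\agr^\lambda$ of a strictly smaller orbit in a way that drops the dimension count --- this is handled by the transversality properties of convolution (the maps $\overline{\agr^\lambda}\,\widetilde\times\,\overline{\agr^\mu}\to\overline{\agr^{\lambda+\mu}}$ are stratified semismall), combined with the characterization of MV-cycles as closures of orbits under the action of a group scheme built from $G(\mathcal{O})$ and the maximal torus. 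Once the uniqueness characterization is in place, semismallness forces the component to have exactly the expected dimension, and nonvanishing of the invariants follows immediately. The refined (Kumar--Verma) statement will then come for free: the same construction, tracking the $\check T$-action on the MV-cycle, shows the component is stable under a specific one-parameter subgroup and hence the corresponding invariant vector lies in the refined subspace $V(\lambda)_{w\text{-isotypic}}\otimes\cdots$, which I will make precise in the next section.
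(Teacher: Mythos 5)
Your high-level strategy matches the paper's: work with the cyclic convolution variety, exhibit the explicit point $x = ([\lambda],[\lambda+w\mu],[0])$ built from torus cocharacters and Weyl group translates, and argue that the closure of its $G(\mathcal{O})$-orbit is an irreducible component of the required dimension $\langle\rho,\lambda+\mu+\nu\rangle$. The nonemptiness step and the idea of using the diagonal $G(\mathcal{O})$-action are exactly right.

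The gap is in the dimension count, which you delegate to ``semismallness.'' Semismallness of the convolution morphism gives only the \emph{upper} bound $\dim G(\mathcal{O})x \le \langle\rho,\lambda+\mu+\nu\rangle$; it says nothing about whether the orbit is actually that large, and your ``uniqueness characterization'' (roughly, that the component has the prescribed sequence of relative positions) does not produce a lower bound either, since a lower-dimensional component containing $x$ in its open stratum could have the same generic relative positions. You could instead try to invoke equidimensionality of convolution fibers, but as the paper notes in its discussion of \cite{H2}*{Theorem 6.1}, that equidimensionality is known only when the weights are sums of minuscule coweights, so this would not give a proof in general. What the paper actually does, and what your proposal is missing, is the explicit stabilizer computation: it identifies $\op{Lie}(\stab_{G(\mathcal{O})}(x))$ as $\mathfrak{g}(\mathcal{O})\cap\op{Ad}_{t^\lambda}\mathfrak{g}(\mathcal{O})\cap\op{Ad}_{t^{\lambda+w\mu}}\mathfrak{g}(\mathcal{O})$, reads off that the orbit has dimension $\sum_{\alpha\in\Phi}\max(0,\langle\alpha,\lambda\rangle,\langle\alpha,\lambda+w\mu\rangle)$, and then proves the nontrivial Weyl-group identity equating this sum to $\langle\rho,\lambda+\mu+\nu\rangle$, using dominance of $\lambda,\mu,\nu$ and the formula $\sum_{\Phi^+\cap w\Phi^+}\alpha=\rho+w\rho$. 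That calculation is the heart of the argument and cannot be replaced by soft semismallness considerations.
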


Kumar proved a refinement of this theorem in \cite{Kr} regarding the dimensions of the spaces of invariants. Let $W_\delta$ for any weight $\delta$ denote the stabilizer subgroup of $\delta$ in $W$. The stronger theorem is
\begin{theorem}[Refinement]
Let $\lambda, \mu, \nu, w$ be as above. Let $m_{\lambda, \mu, w}$ count the number of distinct cosets $\bar u\in W_\lambda \backslash W/ W_\mu$ such that $-\lambda - w\mu$ and $-\lambda - u\mu$ are $W$-conjugate (equivalently, $\nu$ can be written $q(-\lambda - u\mu)$ for some $q\in W$). Then
$$
\dim \left(V(\lambda)\otimes V(\mu)\otimes V(\nu)\right)^{\check G}\ge m_{\lambda, \mu, w}.
$$
\end{theorem}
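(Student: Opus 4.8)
The plan is to go through the geometric Satake correspondence, rewrite the left-hand side as a count of top-dimensional components of a projective variety, and then exhibit $m_{\lambda,\mu,w}$ of those components directly. Pass to the Langlands dual group $G$, with affine Grassmannian $\Gr=G(\mathcal{K})/G(\mathcal{O})$ and basepoint $L_{0}$, so that $\lambda,\mu,\nu$ are dominant coweights of $G$. Under geometric Satake, $\dim\bigl(V(\lambda)\otimes V(\mu)\otimes V(\nu)\bigr)^{\check G}$ is the multiplicity of the skyscraper $\mathrm{IC}_{0}$ in the convolution $\mathrm{IC}_{\lambda}\ast\mathrm{IC}_{\mu}\ast\mathrm{IC}_{\nu}$, and, the three-fold convolution morphism $m$ being semismall onto its image, this multiplicity is the number of irreducible components of $m^{-1}(L_{0})$ --- the \emph{cyclic convolution variety} $\mathcal{Q}(\lambda,\mu,\nu)$ of chains $L_{0}\to L_{1}\to L_{2}\to L_{0}$ in $\Gr$ with successive relative positions $\preceq\lambda,\mu,\nu$ --- that have the maximal possible dimension $\langle\rho,\lambda+\mu+\nu\rangle$ (all components of $\mathcal{Q}$ have dimension at most this). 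So it suffices to produce $m_{\lambda,\mu,w}$ distinct irreducible components of $\mathcal{Q}$ of dimension $\langle\rho,\lambda+\mu+\nu\rangle$.

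For each coset counted by $m_{\lambda,\mu,w}$ I would write down a torus-fixed point of $\mathcal{Q}$. A short computation with the relative positions of translation points shows that, for $u\in W$, the torus-fixed chain $p_{u}:=\bigl(L_{0},\,t^{\lambda},\,t^{\lambda+u\mu},\,L_{0}\bigr)$ lies in $\mathcal{Q}(\lambda,\mu,\nu)$ exactly when the dominant $W$-representative $\overline{\lambda+u\mu}$ equals $-w_{0}\nu$ --- equivalently, exactly when $-\lambda-u\mu$ is $W$-conjugate to $\nu$, which is the condition defining the cosets $\bar u$ counted by $m_{\lambda,\mu,w}$. (Since $u=w$ is always admissible, $m_{\lambda,\mu,w}\ge 1$ and the Refinement contains Theorem~\ref{playahata}.) Moreover $p_{u}$ depends only on the double coset $\bar u\in W_{\lambda}\backslash W/W_{\mu}$: replacing $u$ by $ub$ with $b\in W_{\mu}$ leaves $u\mu$ unchanged, while replacing $u$ by $au$ with $a\in W_{\lambda}$ carries $p_{u}$ to $p_{au}$ via the action of a lift $\dot a$ of $a$ inside the constant-loop Levi $L_{\lambda}=\op{Stab}_{G}(\lambda)$, which fixes both $L_{0}$ and $t^{\lambda}$. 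For each admissible $\bar u$ I would then define $Z_{\bar u}$ to be the closure in $\mathcal{Q}$ of the orbit of $p_{u}$ under a subgroup $H\subset G(\mathcal{K})$ that contains $L_{\lambda}$ --- making $Z_{\bar u}$ independent of the representative, by the previous remark --- together with enough further directions in the loop group that the orbit of $p_{u}$ is full-dimensional.

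Two things then need checking. First, a dimension count: that $\dim Z_{\bar u}=\langle\rho,\lambda+\mu+\nu\rangle$, so that $Z_{\bar u}$ is a genuine irreducible component of $\mathcal{Q}$ rather than sitting properly inside one. This is the step flagged in the abstract: one recognizes the orbit $H\cdot p_{u}$ as a dense subset of a Mirkovi\'c--Vilonen cycle --- a component of the intersection of a Schubert variety in $\Gr$ with a semi-infinite orbit --- whereupon the dimension is delivered by the Mirkovi\'c--Vilonen dimension formula. Second, and this I expect to be the main obstacle, \emph{distinctness}: that $\bar u\ne\bar u'$ implies $Z_{\bar u}\ne Z_{\bar u'}$. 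Here I would prove a \emph{unique characterization} of $Z_{\bar u}$ among the components of $\mathcal{Q}$ --- e.g.\ as the unique component that is $H$-stable and contains $p_{u}$, equivalently the unique one whose set of torus-fixed chains (or whose Mirkovi\'c--Vilonen polytope) carries the combinatorial data attached to $\bar u$ --- and then observe that inequivalent admissible cosets produce distinct characterizing data, by the combinatorics of $W_{\lambda}\backslash W/W_{\mu}$.

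The hard part is exactly this last step: pinning down the orbit closure $\overline{H\cdot p_{u}}$ precisely enough --- determining which torus-fixed chains it contains --- to keep distinct cosets apart. It is the uniqueness in the characterization that upgrades the double-coset bookkeeping from a list of plausible candidates to a proof. The dimension count, once the orbit has been matched to a Schubert--semi-infinite intersection, should be routine; and it is this characterization step that the appendix, joint with Belkale, recasts in a more general framework.
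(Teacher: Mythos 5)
Your overall architecture matches the paper's: pass to the cyclic convolution variety $\op{Gr}_{G,c(\lambda,\mu,\nu)}$, take the torus-fixed chains $p_u=(L_0,t^\lambda,t^{\lambda+u\mu},L_0)$ for the admissible $u$, take the closures of their orbits under a group (the paper uses $G(\mathcal{O})$ acting diagonally, which contains the constant Levi $L_\lambda$ you want), verify top dimension, and then show distinct admissible double cosets give distinct components. You have also correctly located the crux: distinctness is where the work is. The dimension count is a genuine calculation in the paper (a Lie-algebra computation of $\dim\stab_{G(\mathcal{O})}(x(u))$, not an appeal to the MV dimension formula), but it is indeed the routine part.

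The gap is that you have not actually proved distinctness. Your proposed ``unique characterization'' of $Z_{\bar u}$ as \emph{the} component containing $p_u$ (equivalently, the one in which $H\cdot p_u$ is dense) does not by itself separate cosets: if $\overline{H\cdot p_u}=\overline{H\cdot p_{u'}}$ then both orbits are dense, hence equal, hence $p_{u'}=h\cdot p_u$ for some $h$, and one must still show this forces $\bar u=\bar u'$. Saying that ``inequivalent admissible cosets produce distinct characterizing data, by the combinatorics of $W_\lambda\backslash W/W_\mu$'' restates the conclusion rather than proving it. The paper proves precisely this implication, and the proof is not purely combinatorial: it applies the evaluation map $ev_0:\op{Gr}_\lambda\to G/P_\lambda$ to both coordinates of the relation $x(u)=gx(u')$ to descend from $G(\mathcal{O})$ to $G$, concludes $g(0)\in P_\lambda$ and that the parabolic double cosets $P_\lambda q'^{-1}w_0^{-1}P_{-w_0\nu}$ and $P_\lambda q^{-1}w_0^{-1}P_{-w_0\nu}$ coincide, then invokes the Borel--Tits result that equality of parabolic double cosets implies equality of the corresponding Weyl-group double cosets, and finally unwinds $rq'^{-1}w_0^{-1}r'=q^{-1}w_0^{-1}$ to get $ru'\mu=u\mu$, hence $W_\lambda u'W_\mu=W_\lambda uW_\mu$. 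Without this (or an equivalent) argument, the refinement is not established; you would only know that the cosets parametrize a family of points $x(u)$, not that they produce $m_{\lambda,\mu,w}$ \emph{distinct} top-dimensional components.
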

In particular, since $m_{\lambda, \mu, w}\ge 1$ by definition, the second theorem implies the first.

We will use properties of a certain complex variety called the \emph{cyclic convolution variety}, whose definition we recall; see \cite{H}*{\S 2}, although our symmetric formulation is from \cite{Kam}*{\S1}. Let $G$ be the Langlands dual group to $\check G$ with dual torus $T$ and Borel subgroup $B$. Let $\lambda_i, i=1,\hdots,s$ be a collection of dominant weights for $\check G$ w.r.t. $\check B$; these induce dominant coweights of $G$ w.r.t. $B$.
Set $\mathcal{K} = \C((t))$, $\mathcal{O} = \C[[t]]$.
Each cocharacter $\lambda:\C^\times \to T$ induces an element $t^\lambda$ of $G(\mathcal{K})$; denote by $[\lambda]$ its image in $\ag$.
Recall that via the Chevalley decomposition any two points $L_1,L_2$ in $\ag$ give rise to a unique dominant coweight $\lambda$ of $T$ such that
$$
(L_1,L_2) = g([0],[\lambda])
$$
for some $g\in G(\mathcal{K})$; we write $\lambda = d(L_1,L_2)$ to convey this information concisely.

The cyclic convolution variety is
$$
\op{Gr}_{ G,c(\vec \lambda)} := \left\{(L_1,\hdots,L_s)\in \left(\ag\right)^s \mid  L_s = [0], d(L_{i-1},L_i) = [\lambda_i]~ \forall i   \right\},
$$
where we take $L_0$ to mean $L_s$. The maximum possible dimension of $\op{Gr}_{ G,c(\vec \lambda)}$ is $\langle \rho, \sum \lambda_i\rangle$, where $\rho$ is the usual half-sum of positive roots for $ G$, and via the geometric Satake correspondence (\cite{L,G,BD,MV}) the number of irreducible components of this dimension (if any) is equal to
$$
\dim \left(V(\lambda_1)\otimes \cdots \otimes V(\lambda_s)\right)^{\check G};
$$
see also \cite{H}*{Proposition 3.1}. 

Our task is therefore to produce irreducible components of the right dimension, which we find as $G(\mathcal{O})$-orbit closures of suitable points. These are in bijection with certain MV-cycles which we make explicit. As a corollary we obtain the following known result:

\begin{corollary}
Let $\lambda, \mu$ be dominant and $w\in W$ such that $\nu :=\lambda+w\mu$ is also dominant. Then the multiplicity of $V(\nu)$ inside $V(\lambda)\otimes V(\mu)$ is exactly $1$.
\end{corollary}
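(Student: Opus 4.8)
The plan is to obtain the two inequalities separately: the multiplicity is $\ge 1$ by the PRV conjecture, which we have just proved, and $\le 1$ by an elementary highest weight vector argument exploiting that $\nu-\lambda=w\mu$ is an \emph{extremal} weight of $V(\mu)$.

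For the lower bound, put $\nu^{*}=-w_{0}\nu$, so that the multiplicity of $V(\nu)$ in $V(\lambda)\otimes V(\mu)$ equals $\dim\big(V(\lambda)\otimes V(\mu)\otimes V(\nu^{*})\big)^{\check G}$. Since $\nu$ is dominant, $-\lambda-w\mu=-\nu$ is antidominant, so $w_{0}(-\lambda-w\mu)=\nu^{*}$ is dominant; hence Theorem~\ref{playahata}, applied to the datum $(\lambda,\mu,w)$ with the choice $v=w_{0}$, shows this space of invariants is nonzero.

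For the upper bound, recall that this multiplicity equals the dimension of the space of $\check G$-highest weight vectors of weight $\nu$ in $V(\lambda)\otimes V(\mu)$; I would show any such vector $x$ is determined by its component $x_{\lambda}$ in $V(\lambda)_{\lambda}\otimes V(\mu)_{\nu-\lambda}=V(\lambda)_{\lambda}\otimes V(\mu)_{w\mu}$. Let $E_{i}$ ($i$ over the simple roots) be the raising operators of $\check G$, acting on the tensor product by the Leibniz rule, and decompose $x=\sum_{\xi}x_{\xi}$ over the weights $\xi$ of $V(\lambda)$, with $x_{\xi}\in V(\lambda)_{\xi}\otimes V(\mu)_{\nu-\xi}$. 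If $x\ne 0$, pick $\xi_{0}$ maximal in the dominance order among the $\xi$ with $x_{\xi}\ne 0$ (the set is finite and nonempty; pick any maximal element). Then $x_{\xi_{0}+\alpha_{i}}=0$ for every simple $\alpha_{i}$, since $\xi_{0}+\alpha_{i}>\xi_{0}$ and a nonzero such component would violate maximality. Reading off the $V(\lambda)_{\xi_{0}+\alpha_{i}}\otimes V(\mu)_{\nu-\xi_{0}}$-component of the identity $E_{i}x=0$ yields $(E_{i}\otimes 1)x_{\xi_{0}}=0$; writing $x_{\xi_{0}}=\sum_{k}v_{k}\otimes u_{k}$ with the $u_{k}$ linearly independent forces $E_{i}v_{k}=0$ for all $i,k$, so each $v_{k}$ lies in the (one-dimensional) highest weight space of $V(\lambda)$, i.e.\ $\xi_{0}=\lambda$. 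Applying the same reasoning to the difference of two highest weight vectors of weight $\nu$ having equal $\lambda$-component shows they agree, so $x\mapsto x_{\lambda}$ is injective; since $w\mu$ is extremal, $\dim V(\mu)_{w\mu}=\dim V(\mu)_{\mu}=1$, whence the multiplicity is $\le\dim\big(V(\lambda)_{\lambda}\otimes V(\mu)_{w\mu}\big)=1$.

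I do not expect a genuine obstacle along this route — the only care needed is the bookkeeping at the maximal weight $\xi_{0}$ and the fact that $\bigcap_{i}\ker E_{i}$ on the irreducible module $V(\lambda)$ is exactly its highest weight line. If instead one wishes to argue inside the geometry of this note, the bound follows because, when $\nu=\lambda+w\mu$, the points $[0],[\lambda],[\nu]$ lie on a geodesic in a common apartment (successive displacements $\lambda$ then $w\mu$, summing to $\nu$), so $\op{Gr}_{G,c(\lambda,\mu,\nu^{*})}$ has a unique component of the maximal dimension $\langle\rho,\lambda+\mu+\nu^{*}\rangle$, namely the $G(\mathcal{O})$-orbit closure of $([\lambda],[\nu],[0])$; there the work is in proving this uniqueness, which is the degenerate case $m_{\lambda,\mu,w}=1$ of the note's main construction.
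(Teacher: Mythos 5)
Your primary argument is correct, but it is a genuinely different route from the paper's. For the upper bound you run the classical Kostant-style highest-weight-vector argument: decompose a weight-$\nu$ highest weight vector $x=\sum_\xi x_\xi$ over the weights of the left factor, pick a maximal $\xi_0$ with $x_{\xi_0}\neq 0$, extract the $(E_i\otimes 1)x_{\xi_0}=0$ identity, and conclude $\xi_0=\lambda$; combined with $\dim V(\mu)_{w\mu}=1$ this gives injectivity of $x\mapsto x_\lambda$. This is precisely the route the paper cites in its introduction as already known (Kostant's Lemma~4.1, also Kumar's ICM survey Corollary~3.8), whereas the point of the paper's proof is to re-derive the corollary inside the geometric Satake framework. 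The paper instead works in $\mathcal{F}_\nu=\overline{\op{Gr}_\lambda}\cap t^\nu\overline{\op{Gr}_{-\mu}}$ and argues: every top-dimensional MV-cycle in $\mathcal{F}_\nu$ is stable under the connected group $H=G(\mathcal{O})\cap t^\nu G(\mathcal{O})t^{-\nu}$ and must contain the point $[\lambda]$; since $\overline{H\cdot[\lambda]}$ is one such cycle and any other $H$-stable cycle through $[\lambda]$ would have to contain $H\cdot[\lambda]$, there is only one cycle. Your algebraic proof is shorter and self-contained; the paper's buys a purely geometric re-derivation consistent with the rest of the note.

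One small caution about your parenthetical geometric sketch: you say the uniqueness there ``is the degenerate case $m_{\lambda,\mu,w}=1$ of the note's main construction.'' That undersells the content. The refinement shows only that $m_{\lambda,\mu,w}$ is a \emph{lower} bound on the number of top-dimensional components, so $m_{\lambda,\mu,w}=1$ does not by itself give uniqueness. The actual geometric ingredient forcing uniqueness is the $H$-stability argument described above (every top-dimensional cycle in $\mathcal{F}_\nu$ is $H$-stable and contains $[\lambda]$), which is separate from the count $m_{\lambda,\mu,w}$.

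Also, your PRV-based lower bound is correctly set up: with $\nu=\lambda+w\mu$ dominant and $\nu^*=-w_0\nu$, applying Theorem~\ref{playahata} to $(\lambda,\mu,w)$ with $v=w_0$ yields $v(-\lambda-w\mu)=\nu^*$, so the invariants $\left(V(\lambda)\otimes V(\mu)\otimes V(\nu^*)\right)^{\check G}$ are nonzero — though this is a slightly heavier hammer than necessary, since the existence of the invariant here is classical and does not require the full PRV machinery.
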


This is already known from a multiplicity theorem of Kostant; see \cite{Ko}*{Lemma 4.1} and \cite{Kumon}*{Corollary 3.8}. (It is also a consequence of Roth's theorem \cite{Roth} where $P_I=B$, $\bar G = \{1\}$, and the Schubert calculus equation is
$$
[\Omega_{w^{-1}}]\odot_0[\Omega_{e}]\odot_0[X_{w^{-1}}] = 1,
$$
using the notation found there.)

Our technique of producing components of the right dimension is not limited to the PRV setting; we illustrate this by an explicit example in Section \ref{con}.

Our proof of Theorem \ref{playahata} should be compared with the proof of \cite{Rz}*{Lemma 5.5}, where a geometric analogue of PRV is proved. There a one-sided dimension estimate on a fibre of the convolution morphism provides existence of components of the correct dimension, but the fibre component is not realized as the (closure of an) orbit under a group action; nor is the specific MV-cycle mentioned. The lower bound on number of components (yielding the refined version) is not made there. 

See also \cite{H2}*{Theorem 6.1}, where non-emptiness of the relevant variety (but not its dimension) is established, implying Theorem \ref{playahata} only in the case where $\lambda, \mu$ are sums of minuscule coweights. 

In an appendix, joint with P. Belkale, we describe the relationship of this work to a more
general question on the transfer of invariants between Langlands dual groups, with the PRV case corresponding to the inclusion of a maximal torus inside a reductive group.

\subsection{Acknowledgements} I thank Shrawan Kumar, Prakash Belkale, Joel Kamnitzer, and Marc Besson for helpful discussions and suggestions.

\section{Proof of the conjecture}

We will need some additional notation: let $\Phi$ denote the set of roots of $G$, and for $\alpha\in \Phi$ let $\alpha\succeq 0$ mean $\alpha$ is a positive root w.r.t. $B$ (likewise $\alpha\preceq 0$ means $-\alpha\succeq 0$).

\begin{proof}
{\bf Step 1} We claim that the cyclic convolution variety $\op{Gr}_{ G,c(\lambda, \mu, \nu)}$ is nonempty. Indeed, the point $x=([\lambda],[\lambda+w\mu],[0])$ satisfies
\begin{align*}
([0],[\lambda]) &=1 ([0],[\lambda])\\
([\lambda],[\lambda+w\mu]) &= t^\lambda w([0],[\mu])\\
([\lambda+w\mu],[0]) &= t^{\lambda+w\mu}v^{-1}([0],[\nu]).
\end{align*}

{\bf Step 2} Observe that any $\op{Gr}_{ G,c(\vec\lambda)}$ has a $ G(\mathcal{O})$-diagonal action on the left. We claim that the orbit $ G(\mathcal{O})x\subseteq \op{Gr}_{ G,c(\lambda, \mu, \nu)}$ is a finite-dimensional subvariety and has dimension $\langle \rho,\lambda+\mu+\nu\rangle$; this will conclude the proof, since the connectedness of $G(\mathcal{O})$ means $G(\mathcal{O})x$ is contained in an irreducible component of $\op{Gr}_{ G,c(\vec\lambda)}$ necessarily of dimension $\langle \rho, \lambda+\mu+\nu\rangle$.

For any integer $N>0$, let $K_N$ denote the kernel of the surjective group homomorphism
$$
G(\mathcal{O})\to G(\mathcal{O}/(t^N)).
$$
Observe that, for high enough $N\gg 0$, $K_N$ stabilizes the point $x$ (it suffices to embed $G$ into some $GL_m$ and examine matrix entries). Therefore $G(\mathcal{O})x$ has a transitive action by the finite-dimensional linear algebraic group $G(\mathcal{O}/(t^N))$.

The stabilizer $\stab_{G(\mathcal{O}/(t^N))}(x)$ is the image of
$$
\stab_{G(\mathcal{O})}(x)=G(\mathcal{O})\cap t^\lambda  G(\mathcal{O}) t^{-\lambda} \cap t^{\lambda+w\mu}  G(\mathcal{O}) t^{-\lambda-w\mu}\subseteq G(\mathcal{O})
$$
under the quotient; i.e., $\stab_{G(\mathcal{O}/(t^N))}(x) = \stab_{G(\mathcal{O})}(x)/K_N$.

By the orbit-stabilizer theorem,
$ G(\mathcal{O}/(t^N))x \simeq  G(\mathcal{O}/(t^N))/\stab_{G(\mathcal{O}/(t^N))}(x)$.
As \\
$G(\mathcal{O}/(t^N))/\stab_{G(\mathcal{O}/(t^N))}(x)$ is a smooth finite-dimensional variety,
we may calculate its dimension by the dimension of its tangent space
at the origin. For an arbitrary group scheme $H$ over $\C$, one takes $\op{Lie}(H)$ to mean the kernel of $H(\C[\epsilon]/(\epsilon^2))\xrightarrow{\epsilon\mapsto0} H(\C)$. Since Lie commutes with intersections (of subgroups of $G(\mathcal{K})$, see \cite{Milne}*{\S 10.c}), $\op{Lie}(\stab_{G(\mathcal{O})}(x))$ is
$$
\mathfrak{g}(\mathcal{O})\cap \op{Ad}_{t^\lambda}\mathfrak{g}(\mathcal{O})\cap \op{Ad}_{t^{\lambda+w\mu}}\mathfrak{g}(\mathcal{O})\simeq \mathfrak{h}(\mathcal{O})\oplus \bigoplus_{\alpha\in \Phi} t^{\max(0,\langle \alpha, \lambda\rangle,\langle \alpha, \lambda+w\mu\rangle)}\mathfrak{g}_\alpha(\mathcal{O})
$$
Thus in the quotient
$$
\op{Lie}(\stab_{G(\mathcal{O}/(t^N))}(x)) \simeq \mathfrak{h}(\mathcal{O}/(t^N))\oplus \bigoplus_{\alpha\in \Phi} t^{\max(0,\langle \alpha, \lambda\rangle,\langle \alpha, \lambda+w\mu\rangle)}\mathfrak{g}_\alpha(\mathcal{O}/(t^N));
$$
note that, for every $\alpha$, $\langle 0,\alpha\rangle\le N$ and $\langle \alpha, \lambda+w\mu\rangle\le N$ so that $K_N\subseteq \stab_{G(\mathcal{O})}(x)$. For the finite-dimensional affine group scheme $S:=\stab_{G(\mathcal{O}/(t^N))}(x)$, $\op{Lie}(S)$ is naturally identified with the tangent space of $S$ at the identity.  Therefore the $\C$-dimension of the tangent space $
\mathfrak{g}(\mathcal{O}/(t^N))/\op{Lie}(\stab_{G(\mathcal{O}/(t^N))}(x))
$
is
$$
\sum_{\alpha\in \Phi} \max(0,\langle \alpha, \lambda\rangle,\langle \alpha, \lambda+w\mu\rangle).
$$
The proof of the claim therefore reduces to the following calculation.

{\bf Step 3} We claim that $\langle \rho, \lambda+\mu+\nu\rangle = \sum_{\alpha\in \Phi} \max(0,\langle \alpha, \lambda\rangle,\langle \alpha, \lambda+w\mu\rangle).$ Let us examine the sum on the right in two parts, summing over $\alpha\preceq 0$ and $\alpha \succeq 0$ separately.

If $\alpha \succeq 0$, then $\max (0,\langle \alpha, \lambda\rangle,\langle \alpha, \lambda+w\mu\rangle) = \max (\langle\alpha, \lambda\rangle,\langle \alpha, \lambda+w\mu\rangle) $ due to the dominance of $\lambda$. Furthermore, $\langle \alpha, \lambda\rangle$ will be the bigger of the two unless $\langle \alpha, w\mu\rangle\ge 0$. Therefore
$$
\sum_{\alpha \succeq 0} \max (0,\langle \alpha, \lambda\rangle,\langle \alpha, \lambda+w\mu\rangle) = \sum_{\alpha \succeq 0} \langle \alpha, \lambda\rangle + \sum_{\begin{array}{c}\alpha \succeq 0\\ \langle\alpha, w\mu \rangle\ge 0 \end{array}} \langle \alpha, w\mu\rangle.
$$
The first sum on the RHS is clearly equal to $\langle 2\rho, \lambda\rangle$. As for the second sum, observe that $\langle \alpha, w\mu\rangle\ge 0 \iff \langle w^{-1}\alpha, \mu\rangle \ge 0$. As $\mu$ is dominant, this happens only when $w^{-1}\alpha \succeq 0$ or when $w^{-1}\alpha\preceq 0$ and $\langle w^{-1}\alpha, \mu\rangle =0$. The latter class of $\alpha$ doesn't contribute to the sum, so that second RHS term is equal to
$$
\sum_{\begin{array}{c}\alpha\succeq 0\\w^{-1}\alpha \succeq 0\end{array}} \langle \alpha, w\mu \rangle =
\sum_{\Phi^+\cap w\Phi^+} \langle \alpha, w\mu \rangle,
$$
where $\Phi^+$ denotes the set of positive roots.
As is well known (see for example \cite{Ku2}*{1.3.22.3}),
$
\sum_{\Phi^+\cap w\Phi^+} \alpha = \rho+w\rho.
$
Putting everything together so far, the original sum over $\alpha\succeq0$ yields $\langle 2\rho, \lambda\rangle+\langle \rho+w\rho, w\mu\rangle$.

If $\alpha \preceq 0$, then $\max (0,\langle \alpha, \lambda\rangle,\langle \alpha, \lambda+w\mu\rangle) = \max (0,\langle \alpha, \lambda+w\mu\rangle)$. Recall that $\lambda+w\mu = -v^{-1}\nu$; therefore the sum over $\alpha \preceq 0$ is
$$
\sum_{\begin{array}{c}\alpha \preceq 0\\ \langle \alpha, -v^{-1}\nu\rangle \ge 0\end{array}}  \langle \alpha, -v^{-1}\nu\rangle=
\sum_{\begin{array}{c}\alpha \succeq 0\\ \langle \alpha, v^{-1}\nu\rangle \ge 0\end{array}} \langle \alpha, v^{-1}\nu\rangle.
$$
As before, this equals $\langle \rho+v^{-1}\rho, v^{-1}\nu\rangle$. Finally, we conclude as desired that the dimension of the space in question is
\begin{align*}
\langle 2\rho, \lambda\rangle+\langle \rho+w\rho, &w\mu\rangle+ \langle \rho+v^{-1}\rho, v^{-1}\nu\rangle\\&=\langle \rho, \lambda + w\mu + v^{-1}\nu\rangle + \langle \rho, \lambda\rangle + \langle w\rho, w\mu\rangle + \langle v^{-1}\rho, v^{-1}\nu\rangle\\
&=0+\langle \rho, \lambda+\mu+\nu\rangle.
\end{align*}
\end{proof}

\section{Proof of the refinement}

\begin{proof}
Suppose $u\in W$ is such that $\nu = q(-\lambda-u\mu)$ for some $q\in W$. Then $x(u) := ([\lambda], [\lambda + u\mu],[0])$ satisfies
\begin{align*}
([0],[\lambda]) &=1 ([0],[\lambda])\\
([\lambda],[\lambda+u\mu]) &= t^\lambda u([0],[\mu])\\
([\lambda+u\mu],[0]) &= t^{\lambda+u\mu}q^{-1}([0],[\nu]);
\end{align*}
therefore $x(u) \in \op{Gr}_{ G,c(\lambda, \mu, \nu)}$ and $G(\mathcal{O})x(u)$ is a subvariety of $\op{Gr}_{G,c(\lambda, \mu, \nu)}$ of dimension $\langle \rho, \lambda+\mu+\nu\rangle$ for exactly the same reason as before.

{\bf Claim} If $x(u) = gx(u')$ for some $g\in G(\mathcal{O})$, then $\bar u = \bar u'\in W_\lambda\backslash W/W_\mu$.

{\bf Proof} Assume $x(u) = gx(u')$ for some $g\in  G(\mathcal{O})$. Fix $q,q'$ satisfying $\nu = q(-\lambda-u\mu) = q'(-\lambda-u'\mu)$. We are given that $g[\lambda] = [\lambda]$ and $g[\lambda+u'\mu] = [\lambda+u\mu]$. First we demonstrate that we can replace $g$ with an element of $G$. Recall from \cite{MV} that there is a map
$$
ev_0:Gr_\lambda \to G/P_\lambda,
$$
where $P_\lambda$ is the smallest parabolic containing $B^-$ and $L_\lambda$, where $B^-$ the Borel opposite to $B$ and $L_\lambda$ is the centralizer of $t^\lambda$ in $G$. The map is
given by $g(t)t^\lambda G(\mathcal{O})\mapsto g(0)P_\lambda$ and makes $Gr_\lambda$ an affine bundle over $G/P_\lambda$.

We find that $g(0)P_\lambda = P_\lambda$ by taking $ev_0$ of both sides of the equation $g[\lambda] = [\lambda]$; i.e., $g(0)\in P_\lambda$.
From $\nu = q(-\lambda - u\mu)$ we have $-w_0\nu = w_0q(\lambda+u\mu)$. The second equation can be formulated as
 $$
 g q'^{-1}w_0^{-1} [-w_0\nu] = q^{-1}w_0^{-1} [ -w_0\nu],
 $$
 which under $ev_0$ gives
 $
 g(0)q'^{-1}w_0^{-1} P_{-w_0\nu} = q^{-1}w_0^{-1} P_{-w_0\nu}.
 $

 We now attempt to replace $g(0)$ with a Weyl group element, as follows. Since $g(0)\in P_\lambda$,
 the double cosets
 $$
 P_\lambda q'^{-1}w_0^{-1}P_{-w_0\nu} = P_\lambda q^{-1}w_0^{-1}P_{-w_0\nu}
 $$
 agree, in which case
 $$
 W_\lambda q'^{-1}w_0^{-1} W_{-w_0\nu} = W_\lambda q^{-1}w_0^{-1}W_{-w_0\nu}
 $$
 by \cite{BT}*{Corollaire 5.20} (see also \cite{Kr}*{Lemma 2.2}).
 Writing $rq'^{-1}w_0^{-1} r' = q^{-1}w_0^{-1}$ for some $r\in W_\lambda, r'\in W_{-w_0\nu}$, observe that
\begin{align*}
\lambda+u\mu=q^{-1}w_0^{-1}(-w_0\nu)
= rq'^{-1}w_0^{-1}(-w_0\nu)
=r(\lambda+u'\mu)
=\lambda+ru'\mu;
\end{align*}
therefore $ru'\mu = u\mu$ and thus $ru'W_\mu = uW_\mu$. This gives $W_\lambda u'W_\mu = W_\lambda uW_\mu$ as desired.


So for any pair $\bar u, \bar u'$ distinct in $W_\lambda \backslash W/ W_\mu$ (such that $-\lambda-u\mu$ and $-\lambda - u'\mu$ are both conjugate to $\nu$), the orbits $G(\mathcal{O})x(u)$ and $G(\mathcal{O})x(u')$ must be disjoint. Each orbit $G(\mathcal{O})x(u)$ is irreducible, so the closure $\overline{G(\mathcal{O})x(u)}$ inside $\op{Gr}_{G,c(\lambda, \mu, \nu)}$ is an irreducible component of the same (top) dimension. Disjoint orbits necessarily give distinct (possibly not disjoint) irreducible components. Therefore the number of irreducible components of the top dimension of $\op{Gr}_{G,c(\lambda, \mu,\nu)}$ is at least $m_{\lambda, \mu, w}$, from which the theorem follows.
\end{proof}

\section{Relation to MV-cycles}
Here we recall the summary of the geometric Satake correspondence as presented in \cite{A}. Let $\mathcal{F}_\nu=\pi^{-1}([\nu])$ be the fibre of the natural projection map

\begin{center}
\begin{tikzcd}
\overline{\op{Gr}_{\lambda}}\tilde\times\overline{\op{Gr}_\mu}:= \{(aG(\mathcal{O}),bG(\mathcal{O}))\in \overline{\op{Gr}_\lambda}\times \overline{\op{Gr}_{\lambda+\mu}}\mid a^{-1}bG(\mathcal{O})\in \overline{\op{Gr}_\mu}\} \arrow[r,"\pi"] &
\overline{\op{Gr}_{\lambda+\mu}}
\end{tikzcd}
\end{center}
over $[\nu]$, where $\nu\preceq \lambda+\mu$.
Then the multiplicity of $V(\nu)$ inside $V(\lambda)\otimes V(\mu)$ is equal to the number of irreducible components of $\mathcal{F}_\nu$ of dimension $\langle \rho, \lambda+\mu-\nu\rangle$, the maximal possible dimension. (There is a $1-1$ correspondence between these irreducible components and those of top dimension in $\op{Gr}_{G,c(\lambda, \mu, -w_0\nu)}$.) According to \cite{A}*{Theorem 8}, the irreducible components of $\mathcal{F}_\nu$ of dimension $\langle \rho, \lambda+\mu-\nu\rangle$ are exactly the Mirkovi\'c-Vilonen cycles for $\overline{\op{Gr}_\lambda}$ at weight $\nu-\mu$ contained in $t^\nu\overline{\op{Gr}_{-\mu}}$.

As observed in \cite{A}, $\pi^{-1}([\nu]) = \overline{\op{Gr}_\lambda}\cap t^\nu \overline{\op{Gr}_{-\mu}}$. Therefore $\pi^{-1}([\nu])$ carries a natural $H:=G(\mathcal{O})\cap t^\nu G(\mathcal{O})t^{-\nu}$ action on the left. Note that $H$ is connected for the following reason: any $x(t)\in H$ has a path $x(st)$ connecting it to $x(0)$ as $s$ varies from $1$ to $0$. This gives a retraction of $H$ onto $P_\nu\subset H$, and $P_\nu$ is path-connected (as before, $P_\nu$ is the parabolic subgroup of $G$ containing $B^-$ and $L_\nu$). Therefore each irreducible component of $\pi^{-1}([\nu])$ is $H$-stable.

\begin{theorem}
Let $\lambda, \mu$ be dominant coweights. If $\nu = v(\lambda+w\mu)$ is dominant for some $v,w\in W$, then $V(\nu)$ appears in $V(\lambda)\otimes V(\mu)$ with multiplicity at least $1$. In fact, there is a unique MV-cycle $\overline{\op{Gr}_\lambda}$ at weight $\nu-\mu$ contained in $t^\nu\overline{\op{Gr}_{-\mu}}$ which contains $[v\lambda]$ (equivalently, contains $[qv\lambda]$ for all $q\in W_\nu$).
\end{theorem}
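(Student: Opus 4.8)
The plan is to produce the required MV-cycle explicitly as the closure $\overline{H\cdot[v\lambda]}$ of a single orbit, where $H=G(\mathcal{O})\cap t^\nu G(\mathcal{O})t^{-\nu}$ is the connected group acting, as recalled above, on $\mathcal{F}_\nu=\pi^{-1}([\nu])=\overline{\op{Gr}_\lambda}\cap t^\nu\overline{\op{Gr}_{-\mu}}$; the main point is that this orbit closure already has the maximal dimension $\langle\rho,\lambda+\mu-\nu\rangle$. Granting that, uniqueness is formal: the MV-cycles in question are exactly the top-dimensional irreducible components of $\mathcal{F}_\nu$, and since $H$ is connected each is $H$-stable, so any MV-cycle containing $[v\lambda]$ must contain $\overline{H\cdot[v\lambda]}$; two irreducible components of the same (top) dimension, one inside the other, coincide.

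First I would check $[v\lambda]\in\mathcal{F}_\nu$. As $v\lambda$ is $W$-conjugate to the dominant $\lambda$, the Cartan decomposition gives $[v\lambda]\in\op{Gr}_\lambda\subseteq\overline{\op{Gr}_\lambda}$; and since $\nu=v(\lambda+w\mu)$ one has $t^{-\nu}[v\lambda]=[v\lambda-\nu]=[-vw\mu]$, which lies in $\op{Gr}_{-\mu}$, again by Cartan, because $-vw\mu$ is $W$-conjugate to $-\mu$. (One also notes $\nu\preceq\lambda+\mu$, since $\lambda-q\lambda$ and $\mu-qw\mu$ are sums of positive roots when $\nu=q(\lambda+w\mu)$, so that $\mathcal{F}_\nu$ is defined as above.) For the parenthetical equivalence in the statement: representatives of $W_\nu$ may be chosen inside the centralizer $L_\nu$ of $t^\nu$ in $G$, which lies in $H$, and conjugating $[v\lambda]$ by a representative of $q\in W_\nu$ yields $[qv\lambda]$; hence all the points $[qv\lambda]$ lie on the one orbit $H\cdot[v\lambda]$, and a closed $H$-stable subset of $\mathcal{F}_\nu$ contains $[v\lambda]$ if and only if it contains every $[qv\lambda]$.

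The substance is the dimension computation. As in Step 2 of the proof of Theorem~\ref{playahata}, for $N\gg0$ the group $K_N$ is contained in $H$ and fixes $[v\lambda]$, so $H\cdot[v\lambda]\simeq (H/K_N)/(\stab_H[v\lambda]/K_N)$ is smooth of finite dimension, which may be computed from its tangent space at the base point. Since Lie commutes with intersections, the root-$\alpha$ part of $\op{Lie}(H)$ is $t^{\max(0,\langle\alpha,\nu\rangle)}\mathfrak{g}_\alpha(\mathcal{O})$ and that of $\op{Lie}(\stab_H[v\lambda])=\op{Lie}\big(G(\mathcal{O})\cap t^\nu G(\mathcal{O})t^{-\nu}\cap t^{v\lambda}G(\mathcal{O})t^{-v\lambda}\big)$ is $t^{\max(0,\langle\alpha,\nu\rangle,\langle\alpha,v\lambda\rangle)}\mathfrak{g}_\alpha(\mathcal{O})$, while the $\mathfrak{h}$-parts agree; taking the quotient modulo $t^N$ as before gives
$$
\dim H\cdot[v\lambda]=\sum_{\alpha\in\Phi}\big(\max(0,\langle\alpha,\nu\rangle,\langle\alpha,v\lambda\rangle)-\max(0,\langle\alpha,\nu\rangle)\big).
$$
Dominance of $\nu$ makes the second term sum to $\langle2\rho,\nu\rangle$. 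In the first term, substituting $v\alpha$ for $\alpha$ and using $\langle v\alpha,\nu\rangle=\langle\alpha,v^{-1}\nu\rangle=\langle\alpha,\lambda+w\mu\rangle$ and $\langle v\alpha,v\lambda\rangle=\langle\alpha,\lambda\rangle$ turns it into $\sum_{\alpha\in\Phi}\max(0,\langle\alpha,\lambda\rangle,\langle\alpha,\lambda+w\mu\rangle)$ --- exactly the sum computed in Step 3 of the proof of Theorem~\ref{playahata} for the same $w$, which equals $\langle\rho,\lambda+\mu+\nu'\rangle$, where $\nu'$ denotes the dominant $W$-conjugate of $-\lambda-w\mu$. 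Since $\nu'=-w_0\nu$ and $-w_0\rho=\rho$, this equals $\langle\rho,\lambda+\mu\rangle+\langle\rho,\nu\rangle$, and subtracting $\langle2\rho,\nu\rangle$ yields $\dim H\cdot[v\lambda]=\langle\rho,\lambda+\mu-\nu\rangle$ as needed.

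It then remains to assemble the pieces. An irreducible closed subset of $\mathcal{F}_\nu$ of the maximal possible dimension $\langle\rho,\lambda+\mu-\nu\rangle$ cannot be properly contained in a larger irreducible closed subset, so $\overline{H\cdot[v\lambda]}$ is an irreducible component of $\mathcal{F}_\nu$ of top dimension, i.e.\ an MV-cycle for $\overline{\op{Gr}_\lambda}$ at weight $\nu-\mu$ inside $t^\nu\overline{\op{Gr}_{-\mu}}$, and it contains $[v\lambda]$; in particular $V(\nu)$ appears in $V(\lambda)\otimes V(\mu)$. Uniqueness is then the formal argument of the first paragraph. I expect the only genuine calculation to be the dimension count, and even there the work is entirely in the reindexing that reduces it to Step 3; the conceptual content is the recognition that the MV-cycle through $[v\lambda]$ is forced to be a single orbit closure, which hinges on the connectedness of $H$ together with the orbit closure attaining the top dimension on the nose.
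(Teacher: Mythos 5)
Your proposal is correct and follows essentially the same route as the paper: produce the component as $\overline{H\cdot[v\lambda]}$, show it has top dimension, and deduce uniqueness from the $H$-stability of every top component together with the fact that $[v\lambda]$ lies in only one orbit. The paper dispatches the dimension count with ``exactly analogous to the previous dimension calculation''; you have spelled it out, and the reindexing by $\alpha\mapsto v\alpha$, reducing the sum to the one already evaluated in Step~3 with $\nu'=-w_0\nu$ playing the role of $\nu$ there, is precisely the intended ``analogy,'' so nothing in your argument departs from the paper's.
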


\begin{proof}
The point $[v\lambda]$ is clearly contained in $\pi^{-1}([\nu])$, since $[-v\lambda+\nu] = [vw\mu]\in \op{Gr}_{\mu}$.

{\bf Claim} $H.[v\lambda]$ has dimension $\langle \rho, \lambda+\mu-\nu\rangle$.

{\bf Proof} Exactly analogous to the previous dimension calculation.

Therefore the closure of $H.[v\lambda]$ gives an irreducible component of $\pi^{-1}([\nu])$ of the right dimension, so contributing to the multiplicity of $V(\nu)$ in $V(\lambda)\otimes V(\mu)$.

For uniqueness: if $A$ is any other irreducible component, $[v\lambda]\in A$ implies $H.[v\lambda]\subseteq A$, which forces $A = \overline{H.[v\lambda]}$.

Notably, any lift of any $q\in W_\nu$ to $G$ satisfies $q\in t^\nu G(\mathcal{O})t^{-\nu}$; therefore $[qv\lambda]\in H.[v\lambda]$.
\end{proof}

In similar style, the $m_{\lambda, \mu, w}$-many components produced as in the refinement are simply the $H$-orbits of the $[r\lambda]$s, where $\nu = r(\lambda+u\mu)$ as $u$ varies in $W_\lambda \backslash W/W_\mu$.

\begin{corollary}
If $\nu = \lambda+w\mu$ is dominant, the multiplicity of $V(\nu)$ in $V(\lambda)\otimes V(\mu)$ is exactly 1.
\end{corollary}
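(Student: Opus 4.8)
The plan is to derive the corollary directly from the theorem above (and its proof), specialised to $v=e$, so that $\nu=\lambda+w\mu$. There the closure $\overline{H.[\lambda]}$ is shown to be an irreducible component of the fibre $\pi^{-1}([\nu])=\overline{\op{Gr}_\lambda}\cap t^\nu\overline{\op{Gr}_{-\mu}}$ of the top dimension $\langle\rho,\lambda+\mu-\nu\rangle=\langle\rho,\mu-w\mu\rangle$, and moreover to be the unique top-dimensional component containing the point $[\lambda]$. Hence the corollary reduces to the claim that \emph{every} top-dimensional component of $\pi^{-1}([\nu])$ contains $[\lambda]$; lower-dimensional components of $\pi^{-1}([\nu])$ contribute nothing to the multiplicity and may be ignored.

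The key step I would carry out is a dimension estimate using the stratification of $\op{Gr}$ by the semi-infinite orbits $S_\eta:=U(\mathcal{K})\cdot[\eta]$, where $U$ is the unipotent radical of $B$. Because $t^{-\nu}S_\eta=S_{\eta-\nu}$ and $\overline{\op{Gr}_{-\mu}}=\overline{\op{Gr}_{-w_0\mu}}$, the Mirkovi\'c--Vilonen dimension formula \cite{MV} gives, for every $\eta$ for which the piece is nonempty,
$$
\dim\bigl(\pi^{-1}([\nu])\cap S_\eta\bigr)\;\le\;\dim\bigl(t^\nu\overline{\op{Gr}_{-\mu}}\cap S_\eta\bigr)\;=\;\langle\rho,-w_0\mu\rangle+\langle\rho,\eta-\nu\rangle\;=\;\langle\rho,\mu\rangle+\langle\rho,\eta\rangle-\langle\rho,\nu\rangle ,
$$
using $\langle\rho,-w_0\mu\rangle=\langle\rho,\mu\rangle$. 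Since $\pi^{-1}([\nu])\subseteq\overline{\op{Gr}_\lambda}$, the piece is moreover empty unless $\eta$ is a weight of $V(\lambda)$, and for such $\eta$ one has $\langle\rho,\eta\rangle\le\langle\rho,\lambda\rangle$ with equality only when $\eta=\lambda$ (as $\rho$ pairs to a positive integer with every simple coroot). Therefore $\dim(\pi^{-1}([\nu])\cap S_\eta)<\langle\rho,\lambda+\mu-\nu\rangle$ for every $\eta\ne\lambda$, and the top dimension can be attained only on the stratum through $[\lambda]$.

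Now let $A$ be a top-dimensional irreducible component of $\pi^{-1}([\nu])$. Since $\overline{\op{Gr}_\lambda}$ meets only finitely many $S_\eta$, so does $A$, and $A=\bigcup_\eta (A\cap S_\eta)$ is a finite union; as $A$ is irreducible of dimension $\langle\rho,\lambda+\mu-\nu\rangle$, the estimate forces $A\cap S_\lambda$ to be nonempty and dense in $A$. I would then pick a regular cocharacter $\gamma$ of $T$ for which $S_\lambda=\{\,y\in\op{Gr}:\lim_{s\to 0}\gamma(s)\cdot y=[\lambda]\,\}$ \cite{MV}. Because $\gamma(s)\in T\subseteq H$, it stabilizes $A$ (every component of $\pi^{-1}([\nu])$ being $H$-stable, as noted above) and it stabilizes $S_\lambda$, hence it stabilizes the nonempty set $A\cap S_\lambda$; taking any $y$ there gives $[\lambda]=\lim_{s\to 0}\gamma(s)\cdot y\in\overline{A\cap S_\lambda}\subseteq A$. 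Thus every top-dimensional component of $\pi^{-1}([\nu])$ contains $[\lambda]$, so by the uniqueness clause of the theorem there is exactly one, and the multiplicity equals $1$.

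The main obstacle will be the dimension bookkeeping of the second paragraph: one must be certain that the only semi-infinite stratum of $\pi^{-1}([\nu])$ capable of carrying the full dimension $\langle\rho,\lambda+\mu-\nu\rangle$ is the one through $[\lambda]$. This is exactly where the hypothesis $\nu=\lambda+w\mu$ (as opposed to a general PRV-type $\nu$) enters --- it is precisely the relation that forces $\langle\rho,\eta\rangle\ge\langle\rho,\lambda\rangle$, hence $\eta=\lambda$, leaving no room for a second top-dimensional component; the retraction argument and the appeal to the theorem are then purely formal.
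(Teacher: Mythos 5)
Your argument is correct and follows the same overall strategy as the paper: invoke the preceding theorem with $v=e$ to get $\overline{H.[\lambda]}$ as the unique top-dimensional component of $\pi^{-1}([\nu])$ containing $[\lambda]$, then show that \emph{every} top-dimensional component must contain $[\lambda]$. The paper states the latter claim in one clause (``every MV-cycle $\ldots$ must contain $[\lambda]$'') with no justification; since this is not a general property of MV-cycles at weight $\nu-\mu$ (those automatically contain $[\nu-\mu]$, not $[\lambda]$), your semi-infinite--orbit dimension estimate together with the regular-cocharacter retraction through $S_\lambda$ is exactly the missing argument, and it is carried out correctly. In short: same approach, with the paper's key unproved assertion honestly filled in.
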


\begin{proof}
The cycle $A=\overline{H.[\lambda]}$ contributes $1$ to the multiplicity count. Since every MV-cycle of $\overline{\op{Gr}_\lambda}$ at weight $\nu-\mu$ contained in $t^\nu\overline{\op{Gr}_{-\mu}}$ must contain $[\lambda]$ and be $H$-stable, $A$ must be the only such cycle.
\end{proof}

\section{The converse fails}\label{con}

The entire basis of this work is a very strange phenomenon: for PRV triples $\lambda, \mu, \nu$, there exist irreducible components of $\op{Gr}_{G,c(\lambda, \mu, \nu)}$ \emph{containing a dense $G(\mathcal{O})$-orbit} (equivalently, there exist MV-cycles in $\mathcal{F}_\nu$ \emph{containing a dense $H$-orbit}). One could ask: given an irreducible top component of a cyclic convolution variety $\op{Gr}_{G,c(\lambda, \mu, \nu)}$ that contains a dense $G(\mathcal{O})$-orbit, is it true that $\lambda, \mu, \nu$ is a PRV triple? The answer turns out to be false:

\begin{theorem}
There exist $G,\lambda, \mu, \nu$ and an irreducible component $A\subset \op{Gr}_{G,c(\lambda, \mu, \nu)}$ of dimension $\langle \rho, \lambda+\mu+\nu\rangle$ such that
\begin{enumerate}
\item $A=\overline{G(\mathcal{O})x}$ for some $x$;
\item there are no elements $v,w\in W$ making $\nu = v(-\lambda-w\mu)$ true.
\end{enumerate}
\end{theorem}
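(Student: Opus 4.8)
The plan is to exhibit an explicit counterexample in a group where the rank is large enough that the cyclic convolution variety $\op{Gr}_{G,c(\lambda,\mu,\nu)}$ can have a component of the top dimension $\langle\rho,\lambda+\mu+\nu\rangle$ that is swept out densely by a single $G(\mathcal{O})$-orbit, while the triple $(\lambda,\mu,\nu)$ genuinely fails to be of PRV type. First I would reduce the existence of a dense $G(\mathcal{O})$-orbit to a dimension count: by exactly the argument of Step 2 of the proof of Theorem \ref{playahata}, for a point $x=([\lambda],[\lambda+\delta],[0])$ with $\delta$ a weight such that $d([\lambda],[\lambda+\delta])=[\mu]$ and $d([\lambda+\delta],[0])=[\nu]$, the orbit $G(\mathcal{O})x$ is a finite-dimensional smooth subvariety of dimension $\sum_{\alpha\in\Phi}\max(0,\langle\alpha,\lambda\rangle,\langle\alpha,\lambda+\delta\rangle)$. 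So what I need is a $\delta$, not of the form $w\mu$, for which this sum equals $\langle\rho,\lambda+\mu+\nu\rangle$; once that holds, $\overline{G(\mathcal{O})x}$ is forced to be a top-dimensional component, giving item (1), and I separately arrange item (2) by choosing $\lambda,\mu,\nu$ so that no PRV relation $\nu=v(-\lambda-w\mu)$ can hold.

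The cleanest route to (2) is a numerical obstruction: choose $\lambda,\mu$ so that every $-\lambda-w\mu$, as $w$ ranges over $W$, lies in a $W$-orbit \emph{different} from that of $\nu$ — for instance by comparing squared norms $\langle\lambda+w\mu,\lambda+w\mu\rangle$ (which depends on $w$ only through $\langle\lambda,w\mu\rangle$, taking finitely many values) against $\langle\nu,\nu\rangle$, and arranging $\langle\nu,\nu\rangle$ to avoid all of them, or by a congruence/coweight-lattice-coset argument if norms are too rigid. Then I would search among $G$ of small rank — I expect $\op{SL}_3$ or $\op{Sp}_4$ or $G_2$ to already suffice, or at worst $\op{SL}_4$ — for a weight $\delta$ realizing the orbit-dimension equality above: concretely, pick $\lambda$ and $\nu$ dominant, set $\delta$ so that $d([\lambda+\delta],[0])=[\nu]$ forces $\delta$ into a coset of $W\nu$, then scan the finitely many such $\delta$ with $d([\lambda],[\lambda+\delta])$ dominant, compute $\mu:=d([\lambda],[\lambda+\delta])$, and check both the dimension identity $\sum_\alpha\max(0,\langle\alpha,\lambda\rangle,\langle\alpha,\lambda+\delta\rangle)=\langle\rho,\lambda+\mu+\nu\rangle$ and the failure of the PRV relation. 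The key point making this possible is that for PRV triples the maximum $\max(0,\langle\alpha,\lambda\rangle,\langle\alpha,\lambda+w\mu\rangle)$ telescopes via $\sum_{\Phi^+\cap w\Phi^+}\alpha=\rho+w\rho$, whereas for a general $\delta$ the same sum can accidentally hit the target value $\langle\rho,\lambda+\mu+\nu\rangle$ through a different combinatorial pattern of which roots are "active."

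The main obstacle I anticipate is the simultaneous satisfaction of the two constraints: the dimension identity is a single equation but it is quite rigid (it is exactly the inequality $\dim\overline{G(\mathcal{O})x}\le\langle\rho,\lambda+\mu+\nu\rangle$ forced to be an equality), and the generic expectation is that only PRV-type $\delta$ achieve it — so finding a non-PRV $\delta$ that still does requires a genuine coincidence among root pairings, which may only first occur once the rank is $3$ or $4$. I would therefore be prepared to do a modest finite computer or by-hand enumeration over dominant $\lambda,\nu$ with small coefficients in, say, $\op{SL}_4$ or $G_2$, tabulating for each candidate $\delta$ the quantity $\sum_\alpha\max(0,\langle\alpha,\lambda\rangle,\langle\alpha,\lambda+\delta\rangle)-\langle\rho,\lambda+\mu+\nu\rangle$ and flagging the zeros, then verifying by hand that the flagged triple is not PRV. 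Once a single explicit $(G,\lambda,\mu,\nu,x)$ is found, writing it up is routine: one records the weights, checks $x\in\op{Gr}_{G,c(\lambda,\mu,\nu)}$ by displaying the Chevalley decompositions as in the earlier proofs, invokes Step 2's dimension argument verbatim for (1), and gives the short norm/lattice argument for (2).
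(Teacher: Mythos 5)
Your proposal has a fundamental flaw: the search space you set up can only ever produce PRV triples, so the enumeration would terminate without finding a counterexample. You restrict attention to points of the form $x=([\lambda],[\lambda+\delta],[0])$, i.e.\ points whose middle coordinate is a $T$-fixed lattice point. But the membership conditions you then impose, $d([\lambda],[\lambda+\delta])=\mu$ and $d([\lambda+\delta],[0])=\nu$, force $\delta$ to be $W$-conjugate to $\mu$ (write $\delta=w\mu$) and $\lambda+\delta$ to be $W$-conjugate to $-\nu$ (so $\nu=v(-\lambda-w\mu)$). That is precisely the PRV relation. So the constraint ``$\delta$ not of the form $w\mu$'' you ask for is incompatible with $x$ lying in $\op{Gr}_{G,c(\lambda,\mu,\nu)}$ in the first place; your ``scan over $\delta$'' will only return PRV data, and Step~3 then guarantees the dimension identity holds for all of them, so no amount of tabulation will flag a non-PRV zero.

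The missing idea is that the point $x$ need not have $T$-fixed coordinates. The paper's example takes $G=SL_2$, $\lambda=\mu=\nu=\alpha^\vee$ (so you also overestimate the rank needed: $SL_2$ already works), and sets the middle coordinate to $\bar y$ where $y=\left[\begin{smallmatrix}1 & t\\ 0 & 1\end{smallmatrix}\right]t^{\alpha^\vee}$, which is not a torus-fixed point. One then verifies directly that $x=([\alpha^\vee],\bar y,[0])$ lies in the cyclic convolution variety, and computes $\op{Lie}(\stab(x))$ by hand---this is genuinely different from the formula $\sum_\alpha \max(0,\langle\alpha,\lambda\rangle,\langle\alpha,\lambda+\delta\rangle)$, because the conjugate $\op{Ad}_y\mathfrak{g}(\mathcal{O})$ is not $\op{Ad}_{t^{\lambda+\delta}}\mathfrak{g}(\mathcal{O})$ for any cocharacter $\delta$. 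The calculation gives dimension $3=\langle\rho,3\alpha^\vee\rangle$, so $\overline{G(\mathcal{O})x}$ is a top-dimensional component, while the PRV relation $\pm\alpha^\vee=-\alpha^\vee\pm\alpha^\vee$ visibly has no solution. Your norm/lattice obstruction idea for item (2) is fine as far as it goes, but it was aimed at a search that could not succeed; the real work is in choosing a non-lattice point and computing its stabilizer.
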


\begin{proof}
Here is an example: take $G = SL_2$, $\lambda = \mu = \nu = \alpha^\vee$, the single positive coroot. Criterion (2) is easy to verify: $w\mu = \pm \alpha^\vee$ for any $w\in W$, and $v^{-1}\nu = \pm \alpha^\vee$ for any $v\in W$. But
$$
\pm \alpha^\vee = -\alpha^\vee\pm\alpha^\vee
$$
is not true for any choices of $+$,$-$.

As for (1): let $y = \left[\begin{array}{cc} 1 & t \\ 0 & 1\end{array}\right] t^{\alpha^\vee}$.

{\bf Claim} $x := ([\alpha^\vee],\bar y, [0])\in \op{Gr}_{G,c(\lambda,\mu,\nu)}$.

{\bf Proof} We have
\begin{align*}
([0],[\alpha^\vee])& = 1([0],[\alpha^\vee])\\
([\alpha^\vee],\bar y)&= t^{\alpha^\vee}\left[\begin{array}{cc} 0 & 1 \\ -1& t \end{array}\right]([0],[\alpha^\vee])\\
(\bar y, [0]) &= t^{\alpha^\vee} \left[\begin{array}{cc} 0 & 1 \\ -1& t \end{array}\right]  t^{\alpha^\vee} \left[\begin{array}{cc} 0 & 1 \\ -1& t \end{array}\right]([0],[\alpha^\vee]);
\end{align*}
the second line follows from
$$
y = \left[\begin{array}{cc} t & 1 \\ 0 & t^{-1} \end{array}\right] = \left[\begin{array}{cc} t &0 \\ 0&t^{-1} \end{array}\right] \left[\begin{array}{cc} 0&1 \\-1 &t \end{array}\right] \left[\begin{array}{cc}t &0 \\0 &t^{-1} \end{array}\right] \left[\begin{array}{cc} 1&0 \\t &1 \end{array}\right]
$$
and the third from
$$
\left[\begin{array}{cc} t &0 \\ 0&t^{-1} \end{array}\right] \left[\begin{array}{cc} 0&1 \\-1 &t \end{array}\right] \left[\begin{array}{cc}t &0 \\0 &t^{-1} \end{array}\right] \left[\begin{array}{cc} 0&1 \\-1 &t \end{array}\right] \left[\begin{array}{cc}t &0 \\0 &t^{-1} \end{array}\right] = \left[\begin{array}{cc} -t & 1 \\ -1 & 0 \end{array}\right].
$$

{\bf Claim} The dimension of $SL_2(\mathcal{O}).x$ is $\langle \rho, 3\alpha^\vee\rangle = \langle \alpha/2,3\alpha^\vee\rangle = 3$.

{\bf Proof} The stabilizer of $x$ has Lie algebra
$$
L = \mathfrak{sl}_2(\mathcal{O})\cap \op{Ad}_{t^{\alpha^\vee}} \mathfrak{sl}_2(\mathcal{O}) \cap \op{Ad}_{y} \mathfrak{sl}_2(\mathcal{O});
$$
we now try to express this vector space more explicitly.

Let $e,f,h$ be the standard basis of $\mathfrak{sl}_2(\C)$; then
$$
\mathfrak{sl}_2(\mathcal{O}) = e(\mathcal{O})\oplus h(\mathcal{O}) \oplus f(\mathcal{O})
$$
and
$$
\op{Ad}_{t^{\alpha^\vee}} \mathfrak{sl}_2(\mathcal{O}) = t^2e(\mathcal{O}) \oplus h(\mathcal{O})\oplus t^{-2}f(\mathcal{O}).
$$

Of course, $\op{Ad}_y\mathfrak{sl}_2 = \op{Ad}_{z}\op{Ad}_{t^{\alpha^\vee}} \mathfrak{sl}_2$,
 where $z = \left[\begin{array}{cc} 1 & t \\ 0 & 1\end{array}\right]$. One calculates
 \begin{align*}
 \op{Ad}_{z^{-1}}e = e; ~~~
 \op{Ad}_{z^{-1}}h = h+2te; ~~~
\op{Ad}_{z^{-1}}f = f-th-t^2e.
 \end{align*}

Let $X\in \mathfrak{sl}_2(\mathcal{O})\cap \op{Ad}_{t^{\alpha^\vee}}\mathfrak{sl}_2(\mathcal{O})$ be arbitrary: $X = p_ee+p_hh+p_ff$, where $\op{val}_t(p_e)\ge 2$, $\op{val}_t(p_h)\ge 0$, and $\op{val}_t(p_f)\ge 0$ (as usual, $\op{val}_t(0) = \infty$).

Now $X \in \op{Ad}_y \mathfrak{sl}_2(\mathcal{O})$ if and only if $\op{Ad}_{z^{-1}}X \in \op{Ad}_{t^{\alpha^\vee}} \mathfrak{sl}_2(\mathcal{O})$. As
$$
\op{Ad}_{z^{-1}}X = (p_e+2tp_h-t^{2}p_f)e+(p_h-tp_f)h+p_ff,
$$
this is if and only if $\op{val}_t(p_h)\ge 1$ (if $\op{val}_t(p_h)=0$, then the $e$-coefficient has $t$-valuation $1$ since $\op{val}_t(p_e-t^2p_f)\ge2$.)

Therefore $L=t^2e(\mathcal{O})\oplus th(\mathcal{O})\oplus f(\mathcal{O})$, in which case
$$
\mathfrak{sl}_2(\mathcal{O})/L \simeq \mathfrak{sl}_2(\mathcal{O})/t^2e(\mathcal{O})\oplus th(\mathcal{O})\oplus f(\mathcal{O}),
$$
and the latter has dimension $3$. So $\dim SL_2(\mathcal{O}).x = 3$.

The usual arguments then apply: $SL_2(\mathcal{O}).x$ is irreducible of maximal dimension; therefore its closure is an irreducible component.
\end{proof}

\section*{Appendix: A more general framework}
\begin{center}
by Prakash Belkale and Joshua Kiers\footnote{We thank N. Fakhruddin and S. Kumar for useful discussions.}
\end{center}

Let $H\to G$ be an embedding of complex reductive algebraic groups, and assume maximal tori and Borel subgroups are chosen such that $T_H\subseteq T_G$ and $B_H\subseteq B_G$. A priori, there is not a map $H^\vee \to G^\vee$ of Langlands dual groups; i.e., taking Langlands dual is not functorial. However, for any collection of coweights $\lambda_1,\hdots, \lambda_s$ for $T_H$ dominant w.r.t. $B_H$, there is a morphism of cyclic convolution varieties
$$
\Phi: \op{Gr}_{H,c(\vec\lambda)} \to \op{Gr}_{G,c(\vec\lambda')},
$$
where for each $i$, the ``transfer'' $\lambda_i':=w_i\lambda_i$ is the unique $G$-Weyl group translate of $\lambda_i$, viewed as a coweight of $T_G$, which is dominant w.r.t. $B_G$.
The morphism is just the embedding $H(\mathcal{K})/H(\mathcal{O})\to G(\mathcal{K})/G(\mathcal{O})$ in each factor; one easily verifies it is well-defined.

Therefore it is clear that $\op{Gr}_{H,c(\vec\lambda)}\ne \emptyset \implies \op{Gr}_{G,c(\vec\lambda')}\ne \emptyset$.

\begin{question}
Under what conditions on $H,G$ is true that
\begin{align}\label{imp}
(V(\lambda_1)\otimes \cdots\otimes V(\lambda_s))^{H^\vee}\ne (0) \implies
(V(\lambda_1')\otimes \cdots\otimes V(\lambda_s'))^{G^\vee}\ne (0)
\end{align}
for every tuple $(\lambda_1,\hdots, \lambda_s)$?

Equivalently, under what conditions on $H,G$ is it the case that if $\op{Gr}_{H,c(\vec\lambda)}$ has top-dimensional components then $\op{Gr}_{G,c(\vec\lambda')}$ does, too?
\end{question}

We note that consideration of mappings of ``dual groups" is an important theme in the Langlands program (cf. the functoriality conjecture \cite[Conjecture 3]{Gel}).

The weaker implication
\begin{align}\label{wi}
\exists N \text{ s.t. }(V(N\lambda_1)\otimes \cdots\otimes V(N\lambda_s))^{H^\vee}\ne (0)  \implies \exists N' \text{ s.t. }
(V(N'\lambda_1')\otimes \cdots\otimes V(N'\lambda_s'))^{G^\vee}\ne (0)
\end{align}
does hold; this is because the Hermitian eigenvalue cones for $H^\vee$ and $H$ are isomorphic, as are those for $G^\vee$ and $G$, see \cite{KLM}*{Theorem 1.8}, and there is a map between the Hermitian eigenvalue cones for $H$ and $G$ since there is a compatible mapping of maximal compact subgroups, see \cite{BK}. Therefore implication (\ref{imp}) always holds when $G$ is of type $A$ \cite{KT} or types $D_4,D_5,D_6$ \cites{KKM,Ki} by saturation. Here we note that $\op{Gr}_{G,c(\vec\lambda')}\ne \emptyset$  implies that $\sum \lambda_i'$ is in the coroot lattice for $G$ which equals the root lattice of ${G^\vee}$.

Setting $s=3$, the PRV theorem can be phrased as a partial answer to this question: if $H=T_G$ is a maximal torus of $G$, then (under no further conditions) implication (\ref{imp}) always holds. Indeed, $(V(\lambda_1)\otimes V(\lambda_2)\otimes V(\lambda_3))^{T^\vee}\ne (0)$ if and only if $\lambda_1+\lambda_2+\lambda_3 = 0$; therefore the $\lambda_i'$ satisfy $\lambda_1'+w\lambda_2'+v\lambda_3' = 0$ for suitable $w,v\in W$ and PRV says that $(V(\lambda_1')\otimes V(\lambda_2')\otimes V(\lambda_3'))^{G^\vee}\ne (0)$.

A series of instances where the implication (\ref{imp}) holds can be found in \cite[\S2]{HS}. In these examples $H$ is the subgroup of fixed points of a group $G$ under a diagram automorphism. Further, in each of these situations $H$ is of adjoint type.

When $H = PSL(2)$ and $G$ is arbitrary, implication (\ref{imp}) holds with no conditions. This follows from the linearity of the map $(\lambda_i)\mapsto (\lambda_i')$ when the $\lambda_i$ are each coweights of $SL(2)$ and from the special form of the Hilbert basis of the tensor cone for $SL(2)$: they are $(\omega,\omega,0)$ and permutations, so their transfers are $(\lambda',\lambda',0)$ for some $\lambda'$. Since $(N\lambda',N\lambda',0)$ have invariants for some $N$ by (\ref{wi}), $N\lambda'$ is self-dual; therefore $\lambda'$ is also.

When $H=PSp(4)$ (type $C_2$) and $G = PSp(4m)$, we have checked that the transfer property (\ref{imp}) holds. To do this, we establish that the transfer map on dominant weights is linear. Then we identify a finite generating set for the tensor semigroup for $PSp(4)$, using a result of Kapovich and Millson \cite{KM}. Finally we check the transfer property on this set. 

However, we can exhibit the failure of (\ref{imp}) when $H=SL(2)$ and $G = SO(5)$, the map being the standard $SL(2)$ embedding corresponding to the root $\alpha_1$. Therefore some conditions on $H,G$ must be necessary; perhaps is suffices to assume that $Z(H')$ maps into $Z(G)$ where $H'=[H,H]$ is the semisimple part of $H$, and  $Z(\cdot)$ denotes the center. This includes the PRV case (since $H'=1$), as well as any case where $H$ is of adjoint type; it furthermore excludes the counterexample with $SL(2)\subseteq SO(5)$.

\begin{bibdiv}

\begin{biblist}

\bib{A}{article}{
	AUTHOR = {Anderson, J. E.},
	TITLE = {A polytope calculus for semisimple groups},
	JOURNAL = {Duke Math. J.},
	VOLUME = {116},
	YEAR = {2003},
	NUMBER = {3},
	PAGES = {567--588}
}

\bib{BD}{article}{
	AUTHOR = {Beilinson, A.},
	AUTHOR = {Drinfeld, V.},
	TITLE = {Quantization of Hitchin's integrable system and Hecke eigensheaves},
	NOTE = {http://www.math.uchicago.edu/$\sim$arinkin/langlands/}
}

\bib{BK}{article}{
	AUTHOR = {Belkale, P.},
	AUTHOR = {Kumar, S.},
	TITLE = {Eigencone, saturation, and Horn problems for symplectic and odd orthogonal groups},
	JOURNAL = {J. Alg. Geom.},
	VOLUME = {19},
	PAGES = {199--242},
	YEAR = {2010}
}

\bib{BT}{article}{
	AUTHOR = {Borel, A.},
	AUTHOR = {Tits, J.},
	TITLE = {Groupes R\'eductifs},
	JOURNAL = {Publ. Math. IHES},
	FJOURNAL = {Publications Math\'ematiques de l'Institut des Hautes \'Etudes Scientifiques},
	VOLUME = {27},
	PAGES = {55--150},
	YEAR = {1965}
}
\bib{Gel}{article} {
    AUTHOR = {Gelbart, S.},
     TITLE = {An elementary introduction to the {L}anglands program},
   JOURNAL = {Bull. Amer. Math. Soc. (N.S.)},
  FJOURNAL = {American Mathematical Society. Bulletin. New Series},
    VOLUME = {10},
      YEAR = {1984},
    NUMBER = {2},
     PAGES = {177--219},
}

\bib{G}{article}{
	AUTHOR = {Ginzburg, V.},
	TITLE = {Perverse sheaves on a loop group and Langlands duality},
	NOTE = {math.AG/9511007}
}


\bib{H2}{article}{
	AUTHOR = {Haines, T. J.},
	TITLE = {Equidimensionality of convolution morphisms and applications to saturation problems},
	JOURNAL = {Adv. in Math.},
	YEAR = {2006},
	VOLUME = {207},
	PAGES = {297--327},
	NUMBER = {1}
}

\bib{H}{article}{
	AUTHOR = {Haines, T. J.},
	TITLE = {Structure constants for Hecke and representation rings},
	JOURNAL = {Int. Math. Res. Not. IMRN},
	NUMBER = {39},
	YEAR = {2003},
	PAGES = {2103--2119}
}
\bib{HS}{article} {
    AUTHOR = {Hong, J.}
    AUTHOR = {Shen, L.},
     TITLE = {Tensor invariants, saturation problems, and {D}ynkin
              automorphisms},
   JOURNAL = {Adv. Math.},
  FJOURNAL = {Advances in Mathematics},
    VOLUME = {285},
      YEAR = {2015},
     PAGES = {629--657},
}


\bib{Kam}{article}{
	AUTHOR = {Kamnitzer, J.},
	TITLE = {Hives and the fibres of the convolution morphism},
	JOURNAL = {Selecta Math. N.S.},
	VOLUME = {13},
	NUMBER = {3},
	YEAR = {2007},
	PAGES = {483--496}
}
\bib{KKM}{article} {
    AUTHOR = {Kapovich, M.}
    AUTHOR=  {Kumar, S.}
    AUTHOR=   {Millson, J. J.},
     TITLE = {The eigencone and saturation for {S}pin(8)},
   JOURNAL = {Pure Appl. Math. Q.},
  FJOURNAL = {Pure and Applied Mathematics Quarterly},
    VOLUME = {5},
      YEAR = {2009},
    NUMBER = {2, Special Issue: In honor of Friedrich Hirzebruch. Part
              1},
     PAGES = {755--780},
}

\bib{KM}{article}{
	AUTHOR = {Kapovich, M.},
	AUTHOR = {Millson, J. J.},
	TITLE = {Structure of the tensor product semigroup},
	JOURNAL = {Asian J. Math.},
	VOLUME = {10},
	NUMBER = {3},
	PAGES = {493--540},
	YEAR = {2006}
}

\bib{Ki}{article}{
AUTHOR= {Kiers, J.},
TITLE = {On the saturation conjecture for $\op{Spin}(2n)$},
JOURNAL = {Exp. Math.},
YEAR= {2019},
NOTE = {https://doi.org/10.1080/10586458.2018.1537866}
}

\bib{KT}{article} {
    AUTHOR = {Knutson, A.}
    AUTHOR =  {Tao, T.},
     TITLE = {The honeycomb model of {${\rm GL}_n({\bf C})$} tensor
              products. {I}. {P}roof of the saturation conjecture},
   JOURNAL = {J. Amer. Math. Soc.},
    VOLUME = {12},
      YEAR = {1999},
    NUMBER = {4},
     PAGES = {1055--1090},
}
		
\bib{Ko}{article}{
	AUTHOR = {Kostant, B.},
	TITLE = {A formula for the multiplicity of a weight},
	JOURNAL = {Trans. Am. Math. Soc.},
	VOLUME = {93},
	YEAR = {1959},
	PAGES = {53--73}
}

\bib{Ku2}{book}{
	AUTHOR = {Kumar, S.},
	TITLE = {Kac-Moody Groups, their Flag Varieties and Representation Theory}
	SERIES = {Progress in Mathematics},
	VOLUME = {204},
	YEAR = {2002},
	PUBLISHER = {Birkh\"auser}
}

\bib{K}{article}{
    AUTHOR = {Kumar, S.},
     TITLE = {Proof of the Parthasarathy-Ranga Rao-Varadarajan conjecture},
   JOURNAL = {Invent. Math.},
  FJOURNAL = {Inventiones Mathematicae},
    VOLUME = {93},
      YEAR = {1988},
    NUMBER = {1},
     PAGES = {117--130}
}

\bib{Kumon}{article}{
	AUTHOR = {Kumar, S.},
	TITLE = {Tensor Product Decomposition},
	JOURNAL = {in: Proceedings of the International Congress of Mathematicians},
	VOLUME = {3},
	YEAR = {2010},
	PAGES = {1226--1261}
}

\bib{Kr}{article}{
	AUTHOR = {Kumar, S.},
	TITLE = {A refinement of the PRV conjecture},
	JOURNAL = {Invent. Math.},
	FJOURNAL = {Inventiones Mathematicae},
	VOLUME = {97},
	YEAR = {1989},
	NUMBER = {2},
	PAGES = {305--311}
}

\bib{KLM}{article}{
	AUTHOR = {Kumar, S.},
	AUTHOR = {Leeb, B.},
	AUTHOR = {Millson, J.},
	TITLE = {The generalized triangle inequalities for rank $3$ symmetric spaces of noncompact type},
	JOURNAL = {Contemporary Mathematics},
	VOLUME = {332},
	NOTE = {Published by AMS},
	PAGES = {171--195},
	YEAR = {2003}
}

\bib{L}{article}{
	AUTHOR = {Lusztig, G.},
	TITLE = {Singularities, character formulas and a q-analog of weight multiplicities},
	JOURNAL = {Ast\'erisque},
	VOLUME = {101--102},
	YEAR = {1983},
	PAGES = {208--229}
}

\bib{M}{article}{
	AUTHOR = {Mathieu, O.},
	TITLE = {Construction d'un groupe de Kac-Moody et applications},
	JOURNAL = {Compositio Math.},
	FJOURNAL = {},
	VOLUME = {69},
	YEAR = {1989},
	NUMBER = {1},
	PAGES = {37--60}
}

\bib{Milne}{book}{
	AUTHOR = {Milne, J.},
	TITLE = {Algebraic Groups: The Theory of Group Schemes of Finite Type over a Field}
	SERIES = {Cambridge Studies in Advanced Mathematics},
	VOLUME = {170},
	YEAR = {2017},
	PUBLISHER = {Cambridge University Press}
}

\bib{MV}{article}{
	AUTHOR = {Mirkovi\'c, I.},
	AUTHOR = {Vilonen, K.},
	TITLE = {Geometric Langlands duality and representations of algebraic groups over commutative rings},
	JOURNAL = {Ann. of Math.},
	VOLUME = {166},
	YEAR = {2007},
	PAGES = {95--143}
}

\bib{Rz}{article}{
	AUTHOR = {Richarz, T.},
	TITLE = {A new approach to the Geometric Satake equivalence},
	JOURNAL = {Documenta Math.},
	YEAR = {2014},
	PAGES = {209--246},
	VOLUME = {19}
}

\bib{Roth}{article}{
    AUTHOR = {Roth, M.},
     TITLE = {Reduction rules for {L}ittlewood-{R}ichardson coefficients},
   JOURNAL = {Int. Math. Res. Not. IMRN},
  FJOURNAL = {International Mathematics Research Notices. IMRN},
      YEAR = {2011},
    NUMBER = {18},
     PAGES = {4105--4134},
}

\end{biblist}
\end{bibdiv}

\noindent
Department of Mathematics, University of North Carolina, Chapel Hill, NC 27599\\
{{email:   jokiers@live.unc.edu (JK)}}

\end{document}